\documentclass[11pt]{amsart}

\usepackage[margin=1.1in]{geometry}
\usepackage{amsmath,amssymb,amsthm,mathtools}
\usepackage{enumitem}
\usepackage{microtype}
\usepackage[colorlinks=true,linkcolor=blue,citecolor=blue,urlcolor=blue]{hyperref}

\newtheorem{theorem}{Theorem}
\newtheorem{lemma}[theorem]{Lemma}

\newtheorem{remark}[theorem]{Remark}

\DeclareMathOperator{\ordop}{ord}

\newcommand{\ZZ}{\mathbb Z}
\newcommand{\NN}{\mathbb N}
\newcommand{\RR}{\mathbb R}

\title{Averaged Fourier Estimates and Dyadic Approximation on the Cantor set}
\author{Prasuna Bandi}
\date{}

\address{Department of Mathematics, Indian Institute of Science Education and Research Thiruvananthapuram, Vithura, Thiruvananthapuram, Kerala 695551, India}

\email{prasuna@iisertvm.ac.in}

\subjclass[2020]{Primary 11J83; Secondary 28A80, 11K60, 37A45}

\keywords{Middle-third Cantor set, dyadic approximation, metric Diophantine approximation, Cantor measure, Fourier transform}

\begin{document}

\begin{abstract}
Let $C$ be the middle-third Cantor set and let $\mu$ be the natural Cantor probability measure. Let
\[
        \gamma=\frac{\log2}{\log3}.
\]
The two main results of this paper are
\[
        \mu\{x\in C:\|2^n x\|<n^{-\tau}\text{ for infinitely many }n\}=0
        \qquad \text{ for } \tau>2-\gamma.
\]
and 
\[
      \mu\{x\in C:\|2^n x\|<n^{-\tau}\text{ for infinitely many }n\}=1 \qquad \text{ for } \tau<\frac{1-\gamma}{2}.
\]
These results give new progress toward Velani's conjecture on
zero-one law for dyadic approximation in the middle-third Cantor set.
\end{abstract}

\maketitle
\section{Introduction}

Let \(C\) denote the middle-third Cantor set and let \(\mu\) be the
natural Cantor probability measure on \(C\). Thus
\[
        C=\left\{x\in[0,1]:
        x=\sum_{k=1}^{\infty}\frac{a_k}{3^k},
        \ a_k\in\{0,2\}\right\},
\]
and
\[
        \gamma=\dim_H C=\frac{\log2}{\log3}.
\]
We write
\[
        \|t\|=\min_{m\in\mathbb Z}|t-m|
\]
for the distance from \(t\) to the nearest integer.\\
For a positive function \(\psi\), define
\[
        W_2(\psi)
        =
        \left\{
        x\in C:
        \left|x-\frac{p}{2^n}\right|
        <
        \frac{\psi(2^n)}{2^n}
        \text{ for infinitely many }(p,n)\in\mathbb Z\times\mathbb N
        \right\}.
\]
Equivalently,
\[
        x\in W_2(\psi)
        \quad\Longleftrightarrow\quad
        \|2^n x\|<\psi(2^n)
        \text{ for infinitely many }n.
\]
This paper concerns dyadic approximation in the middle-third Cantor set, which is the study of the set \(W_2(\psi)\).  The broader
metric theory of rational approximation on fractal sets goes back to
Mahler's question \cite{Mahler} and has since developed in several directions.
Recent progress includes approximation of Cantor points by arbitrary
rationals \cite{Benardhezhang,DattaJana}, intrinsic approximation by rational
points lying in the Cantor set itself \cite{Tanwangwu,zbMATH07301410,zbMATH06077915}, and triadic
approximation, where the allowed denominators are powers of \(3\)
\cite{Levsalpvelani}.  
In the
triadic setting, where denominators are powers of \(3\), the geometry of
the Cantor construction is aligned with the denominators, and a
zero-full law follows from the work of Levesley, Salp and Velani
\cite{Levsalpvelani}.  The dyadic problem is more arithmetic, the denominators \(2^n\) are multiplicatively independent
from the construction base \(3\) which makes it more difficult. This is also related to Furstenberg's $\times 2\times 3$ principle; see the introduction of \cite{Baker} for further discussion.

A conjecture of Velani, stated in \cite{AllenChowYu}, predicts the following dichotomy for the set $W_2(\psi)$.  For monotone \(\psi\),
\[
        \mu(W_2(\psi))
        =
        \begin{cases}
        0, & \displaystyle \sum_{n=1}^{\infty}\psi(2^n)<\infty,\\[0.7em]
        1, & \displaystyle \sum_{n=1}^{\infty}\psi(2^n)=\infty.
        \end{cases}
\]
For the power functions
\[
        \psi(2^n)=n^{-\tau},
\]
this conjecture states
\[
        \mu\{x\in C:\|2^n x\|<n^{-\tau}\text{ infinitely often}\}=0
        \quad \text{ for } \tau>1,
\]
and
\[
        \mu\{x\in C:\|2^n x\|<n^{-\tau}\text{ infinitely often}\}=1
        \quad \text{ for }\tau\leq 1.
\]

The purpose of the present paper is to improve the known 
ranges on both sides.  Our first result is
a convergence theorem.

\begin{theorem}\label{thm:intro-convergence}
Let $\tau>2-\gamma.$
Then
\[
        \mu\left(
        \left\{
        x\in C:\|2^n x\|<n^{-\tau}
        \text{ for infinitely many }n
        \right\}
        \right)=0.
\]
\end{theorem}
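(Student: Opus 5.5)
The plan is to apply the convergence half of Borel--Cantelli to the events $E_n=\{x\in C:\|2^n x\|<n^{-\tau}\}$. It suffices to prove $\sum_n\mu(E_n)<\infty$. To control $\mu(E_n)$ we use Fourier analysis. We majorize the indicator of $\{\|t\|<\delta\}$, with $\delta=n^{-\tau}$, by a smooth $1$-periodic bump $\phi_\delta$ of height $1$. This bump is supported on $\|t\|<2\delta$, and its Fourier coefficients satisfy $|\hat\phi_\delta(k)|\ll\delta\,(1+\delta|k|)^{-A}$. Then
\[
\mu(E_n)\le\int\phi_\delta(2^n x)\,d\mu(x)=\sum_{k\in\ZZ}\hat\phi_\delta(k)\,\hat\mu(2^n k),
\]
where $\hat\mu(\xi)=\int e^{-2\pi i\xi x}d\mu(x)=e^{-\pi i\xi}\prod_{j\ge1}\cos(2\pi\xi/3^j)$. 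The $k=0$ term gives $\ll\delta=n^{-\tau}$, which is summable once $\tau>1$. The whole difficulty lies in the nonzero frequencies: up to a rapidly decaying tail, we must bound $\delta\sum_{0<|k|\le \delta^{-1}n^{\epsilon}}|\hat\mu(2^nk)|$.

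The pointwise decay of $\hat\mu$ along $2^n k$ is not known to be good enough, so we average in $n$, as the title suggests. We group $n$ into dyadic blocks $N\le n<2N$ and bound
\[
\sum_{N\le n<2N}\mu(E_n)\ll N^{1-\tau}+N^{-\tau}\sum_{0<|k|\le N^{\tau+\epsilon}}\ \sum_{N\le n<2N}|\hat\mu(2^nk)|.
\]
The key estimate to aim for is an averaged bound of the form
\[
\sum_{|k|\le K}\ \sum_{N\le n<2N}|\hat\mu(2^nk)|\ll K^{\gamma}N^{1+\epsilon}\quad(\text{or } \ll K\,N^{\gamma}\text{-type}),
\]
which should come from an $L^1$-type estimate $\sum_{|k|\le K}|\hat\mu(k\cdot)|$ on the product of cosines. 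We use the standard fact that $\sum_{|\xi|\le T,\ \xi\in\ZZ}|\hat\mu(\xi)|\ll T^{\gamma}$ up to logarithmic losses; equivalently, $\int_0^T|\hat\mu|^2\approx T^{1-\gamma}$ combined with Cauchy--Schwarz, together with counting of $3$-adic digit patterns. We also use the fact that the frequencies $2^nk$, for $|k|\le K$ and $N\le n<2N$, are distinct enough: a given integer has at most one representation per $n$. These frequencies live in $[-K2^{2N},K2^{2N}]$ and are $2^N$-separated. Using the product structure $\hat\mu(\xi)=\prod_j\cos(2\pi\xi/3^j)$, we drop the factors with $3^j>2^{N}$-scale. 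The truncated product depends only on $\xi\bmod 3^{J}$, and $2^n k\bmod 3^J$ equidistributes as $k$ varies over complete blocks. Hence the average over $k$ of the truncated product is at most about $(\text{range})\cdot 3^{-J(1-\gamma)}$, and this gives the averaged bound.

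Inserting $K=N^{\tau+\epsilon}$ with the $K^\gamma$-type saving, a block contributes roughly $N^{-\tau}\cdot N^{(\tau+\epsilon)}\cdot N^{\gamma-?}$. Balancing so that the block sum is $\ll N^{-\eta}$ should produce exactly the condition $\tau>2-\gamma$: the trivial count $K\cdot N$ costs $N^{1+\tau}$ against the gain $N^{-\tau}$, and the Cantor averaging saves a factor $N^{\gamma-1}\cdot N^{-(\tau-1)}$-type, leaving convergence iff $\tau>2-\gamma$. Summing over dyadic $N$ then gives $\sum_n\mu(E_n)<\infty$, and Borel--Cantelli finishes. The main obstacle is the averaged Fourier estimate itself, namely proving that $\sum_{k\le K}|\hat\mu(2^nk)|$, averaged over $n$, saves a power $K^{\gamma}$ uniformly. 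I would first try it via the $\bmod 3^J$ periodicity of the truncated product, with $3^J\approx K$, together with the fact that $2$ is a primitive root modulo $3^J$. Multiplication by $2^n$ then permutes residues coprime to $3$, so the sum over $k$ equals a complete residue sum of the truncated product, which is $\ll K^{\gamma}$.
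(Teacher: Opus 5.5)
There is a genuine gap. Your Fourier set-up is the same machinery the paper uses for its first-moment step: the smooth majorant, the dyadic blocks in $n$, truncating the cosine product to a function of $\xi \bmod 3^J$, and using that $2$ generates the units mod $3^J$. The trouble is in the step you yourself flag as the main obstacle, and in the numerology after it. Neither averaged estimate you propose gives $\tau>2-\gamma$ when the expansion is carried out at the fine scale $\delta\asymp N^{-\tau}$.

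\emph{The per-$n$ bound.} The bound $\sum_{|k|\le K}|\widehat\mu(2^nk)|\ll K^\gamma$ is true. Your last paragraph essentially proves it: $k\mapsto 2^nk$ permutes $\mathbb{Z}/3^J\mathbb{Z}$, and $\sum_{0\le y<3^J}P_J(y)\le 2^J$. But it needs no averaging in $n$, and it gives only $\mu(E_n)\ll \delta+\delta^{1-\gamma}\ll n^{-\tau(1-\gamma)}$. That yields convergence only for $\tau>1/(1-\gamma)\approx 2.71$.

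\emph{The $n$-averaged bound.} The paper's version is $\sum_{N<n\le 2N}|\widehat\mu(\ell 2^n)|\ll N^\gamma 3^{(1-\gamma)\nu_3(\ell)}$. With it, the nonzero frequencies contribute $\ll N^\gamma\sum_{\ell\ne0}|a_\ell|3^{(1-\gamma)\nu_3(\ell)}\ll N^\gamma$ to each dyadic block, independently of $\delta$. That is never summable over dyadic $N$.

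\emph{The balancing step.} Your balancing paragraph does not close this gap. It leaves an undetermined exponent. The heuristic you do write, $N\cdot N^{\gamma-1}\cdot N^{-(\tau-1)}=N^{1+\gamma-\tau}$, would give $\tau>1+\gamma$, not $2-\gamma$.

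\emph{A side error.} $\int_0^T|\widehat\mu|^2\approx T^{1-\gamma}$ combined with Cauchy--Schwarz yields only $T^{1-\gamma/2}$. That is weaker than $T^\gamma$, so it is not equivalent to the $L^1$ bound you cite.

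The missing idea is a coarse-to-fine transfer using Ahlfors regularity, $c_1r^\gamma\le\mu(B(x,r))\le c_2r^\gamma$. Write $A_n(r)=\{x\in C:\|2^nx\|<r\}$.

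First, the paper runs the Fourier argument only at the coarse scale $\delta_N=N^{-(1-\gamma)}$. At that scale the main term $N\delta_N$ and the error $N^\gamma$ balance, which gives $\sum_{N<n\le 2N}\mu(A_n(\delta_N))\ll N^\gamma$.

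Second, it shows $\mu(A_n(\sigma))\ll(\sigma/\delta)^\gamma\mu(A_n(\delta))$ for $\sigma<\delta/20$. To see this, pick a point $x_p\in C$ within $\sigma 2^{-n}$ of each relevant $p/2^n$.
\begin{itemize}
\item $A_n(\sigma)$ is covered by the balls $B(x_p,2\sigma 2^{-n})$.
\item The balls $B(x_p,\delta/(20\cdot 2^n))$ are pairwise disjoint, and their intersections with $C$ lie in $A_n(\delta)$.
\item Hence both measures are comparable to the number of such $p$ times the corresponding radius to the power $\gamma$.
\end{itemize}

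Taking $\sigma=N^{-\tau}$ gives block sums $\ll (N^{-\tau}/N^{-(1-\gamma)})^\gamma N^\gamma=N^{\gamma(2-\gamma-\tau)}$. This is exactly where the threshold $2-\gamma$ comes from. Without a step that turns the change of scale into the factor $(\sigma/\delta)^\gamma$, absolute-value Fourier bounds at scale $n^{-\tau}$ do not reach this range.
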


Since
\[
        2-\gamma
        =
        2-\frac{\log2}{\log3}
        \approx 1.36907,
\]
this gives a convergence result closer to the conjectural threshold
\(\tau=1\) than the previously known range of Allen--Baker--Chow--Yu
\cite{AllenBakerChowYuNote}, who proved the corresponding 
statement for
\[
        \tau>\frac1\gamma-0.03
        =
        \frac{\log3}{\log2}-0.03
        \approx 1.55496.
\]
Our second result is a divergence theorem.

\begin{theorem}\label{thm:intro-divergence}
Let
\[
        0<\tau<\frac{1-\gamma}{2}.
\]
Then
\[
        \mu\left(
        \left\{
        x\in C:\|2^n x\|<n^{-\tau}
        \text{ for infinitely many }n
        \right\}
        \right)=1.
\]
\end{theorem}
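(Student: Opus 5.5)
We prove Theorem~\ref{thm:intro-divergence} with a second-moment (quasi-independence) argument for the sets
\[
E_n=\{x\in C:\|2^n x\|<n^{-\tau}\},
\]
with the needed Fourier input coming from averaged bounds on $\widehat\mu$ along the frequencies $k2^n$.

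Replace the indicator of $E_n$ by a smooth one-periodic bump $\phi_n(t)$. It should be supported in $\|t\|<n^{-\tau}$ and have mass $\asymp n^{-\tau}$, so its Fourier coefficients satisfy $|\hat\phi_n(k)|\lesssim n^{-\tau}$ and decay rapidly for $|k|\gg n^{\tau}$. Then
\[
\int\phi_n(2^nx)\,d\mu=\sum_k\hat\phi_n(k)\,\widehat\mu(k2^n),
\]
where $\widehat\mu(\xi)=\int e^{-2\pi i\xi x}\,d\mu(x)$ satisfies $|\widehat\mu(\xi)|=\prod_{j\ge1}|\cos(2\pi\xi/3^j)|$.

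\textbf{First moment.} We want $\mu(E_n)\gg n^{-\tau}$ for most $n$. The $k=0$ term gives exactly $\hat\phi_n(0)\asymp n^{-\tau}$. For the error, we use an averaged estimate: over $n\le N$ and $0<|k|\le K$, a Cauchy--Schwarz/large-sieve type bound on $\sum|\widehat\mu(k2^n)|$ shows these coefficients are on average $\ll N^{-\delta}$ for some $\delta>0$. The mechanism is that the ternary digits of $k2^n$ are typically equidistributed, so a positive proportion of the cosine factors are bounded away from $1$. This makes $\sum_{n\le N}\mu(E_n)\gg N^{1-\tau}$, which diverges.

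\textbf{Second moment.} For $m<n$ we expand
\[
\int\phi_m(2^mx)\,\phi_n(2^nx)\,d\mu=\sum_{k,l}\hat\phi_m(k)\,\hat\phi_n(l)\,\widehat\mu(k2^m+l2^n).
\]
The diagonal term $k=l=0$ gives $\mu(E_m)\mu(E_n)$ up to errors. The off-diagonal terms are controlled by an averaged bound
\[
\sum_{m<n\le N}\ \sum_{|k|\le m^{\tau},\,|l|\le n^{\tau}}|\widehat\mu(k2^m+l2^n)|\ll N^{2-\epsilon}.
\]
We trivially bound the terms with $n-m$ small, say below $\log N$, by the number of pairs, which is $N\log N$. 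Weighted by $|\hat\phi_m(k)\hat\phi_n(l)|\lesssim(mn)^{-\tau}$, the off-diagonal contribution then has size about $N^{2-2\tau}\cdot N^{2\tau}\cdot N^{-\epsilon'}$ from the frequency counting. This must be $o\big((\sum_{n\le N}\mu(E_n))^2\big)=o(N^{2-2\tau})$, so we need the average decay of $\widehat\mu$ to beat the $N^{2\tau}$ loss from the number of frequency pairs. The averaged decay rate available, roughly a power $N^{-(1-\gamma)}$ coming from the dimension gap in an $L^2$ bound of the form
\[
\sum_{|\xi|\le X}|\widehat\mu(\xi)|^2\ll X^{1-\gamma},
\]
applied to the set of integers $k2^m+l2^n$, yields exactly the condition $2\tau<1-\gamma$.

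Once quasi-independence on average holds, the Kochen--Stone/Erd\H{o}s--R\'enyi form of Borel--Cantelli gives $\mu(\limsup E_n)>0$. To upgrade to full measure, we localize: repeat the argument on each cylinder of $C$, using self-similarity (the restriction of $\mu$ to a level-$j$ cylinder is an affine image of $\mu$, and the shift by the cylinder's left endpoint only changes phases). This gives $\mu(\limsup E_n\cap I)\gg\mu(I)$ uniformly over cylinders $I$, and then Lebesgue density with respect to $\mu$ gives measure $1$.

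\textbf{Main obstacle.} The hardest step is the averaged bound for $|\widehat\mu(k2^m+l2^n)|$. These frequencies form a sparse, arithmetically structured set, so the large-sieve $L^2$ bound cannot be applied directly: we must show the dyadic structure does not concentrate on the ternary spikes of $\widehat\mu$. I would first try bounding the count of $(k,l,m,n)$ for which $k2^m+l2^n$ lies within $O(1)$ of a number whose ternary digits are mostly $0$, using the multiplicative independence of $2$ and $3$.
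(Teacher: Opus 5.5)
Your architecture (smooth minorants, expansion against $\widehat\mu$, first and second moments, threshold $2\tau<1-\gamma$) matches the paper's. There is a genuine gap, though. The step that carries all the content is a power-saving average bound for $|\widehat\mu|$ along $\ell 2^n$ and along $\ell 2^n\pm j2^m$. You assert it rather than prove it, and the mechanisms you offer cannot deliver it.

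The $L^2$ bound $\sum_{|\xi|\le X}|\widehat\mu(\xi)|^2\ll X^{1-\gamma}$ is useless here. Your frequencies have size $X\approx 2^N$, but there are only about $N^{2+2\tau}$ of them, so Cauchy--Schwarz yields roughly $N^{1+\tau}2^{N(1-\gamma)/2}$. That is exponentially worse than the trivial bound. That $L^2$ bound is compatible with $|\widehat\mu|\asymp1$ on the entire sparse set, and $\widehat\mu$ really has such spikes, since $|\widehat\mu(3^jq)|=|\widehat\mu(q)|$.

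Appeals to ``typical equidistribution of ternary digits of $k2^n$'' or to multiplicative independence of $2$ and $3$ are qualitative and give no rate. The rate is essential. Already in your first moment, with $|\hat\phi_n(k)|\lesssim n^{-\tau}$ on $\asymp n^{\tau}$ frequencies, an average decay $N^{-\delta}$ only bounds the error by about $N^{1-\delta}$. So you need $\delta>\tau$, not merely $\delta>0$.

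The missing idea is to truncate the Riesz product at depth $K$, with $3^K\le H<3^{K+1}$ where $H$ is the length of the window of exponents. Then $|\widehat\mu(q)|\le P_K(q)=\prod_{r\le K}|\cos(2\pi q/3^r)|$, and $P_K(q)$ depends only on $q\bmod 3^K$. Because $2$ has order $2\cdot3^{K-1}$ modulo $3^K$, write $q=3^aq_0$ with $3\nmid q_0$. Over any $H$ consecutive $n$, the residues $q_02^n\bmod 3^{K-a}$ hit each unit class $O(3^a)$ times. This converts the sparse sum into a complete residue sum $\sum_{0\le y<3^{K-a}}P_K(b+3^ay)$. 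That sum is at most $2^{K-a}$, by applying $\sum_{k=0}^{2}|\cos(\theta+2\pi k/3)|\le2$ one ternary digit at a time.

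The result is $\sum_{M<n\le M+H}|\widehat\mu(q2^n)|\ll H^{\gamma}3^{(1-\gamma)\min(\nu_3(q),K)}$. Fixing $n$ and running $m$ with the shift $b=u2^n$ gives the bilinear analogue with $H^{1+\gamma}$. The $3^{(1-\gamma)\nu_3}$ losses are absorbed by $\sum_{\ell\ne0}|a_\ell|3^{(1-\gamma)\nu_3(\ell)}\ll1$. So the saving is exactly $H^{-(1-\gamma)}$, i.e. $(2/3)^K$, and it does not come from any $L^2$ dimension gap.

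Once you have this, your Kochen--Stone and cylinder-localization steps are unnecessary. Work on the block $N<n\le2N$ with the fixed radius $(2N)^{-\tau}$, so that one bump serves the whole block and the bilinear estimate applies with $H=N$. Chebyshev then bounds the measure of points with no hit in the block by $\ll N^{1+\gamma}/N^{2-2\tau}$. This is summable along $N=2^k$, and the ordinary Borel--Cantelli lemma gives full measure directly.
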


Numerically,
\[
        \frac{1-\gamma}{2}
        =
        \frac12\left(1-\frac{\log2}{\log3}\right)
        \approx 0.184535.
\]
This improves the exponent \(0.01\) obtained by Baker
\cite{Baker}.\\\\
The methods of this paper can also be adapted to obtain inhomogeneous versions
and asymptotic counting statements analogous to Baker's theorem, but we restrict attention here to the homogeneous zero-one statements.\\\\
We now describe the main input of the paper.  Let
\[
        e(t)=e^{2\pi i t}.
\]
The Fourier transform of the Cantor measure has the classical product
formula
\[
        \widehat\mu(q)
        =
        \prod_{r=1}^{\infty}
        \frac{1+e(2q/3^r)}{2},
        \qquad \text{ for s} q\in\mathbb Z.
\]
Hence,
\[
        |\widehat\mu(q)|
        =
        \prod_{r=1}^{\infty}
        \left|
        \cos\left(\frac{2\pi q}{3^r}\right)
        \right|.
\]
Our main technical input is an average estimate for this product along
the orbit \(q2^n\).  If \(H\ge3\) and \(K\) is chosen by
\[
        3^K\le H<3^{K+1},
\]
then we prove that, for every nonzero integer \(q\),
\[
        \sum_{M<n\le M+H}
        |\widehat\mu(q2^n)|
        \ll
        H^\gamma
        3^{(1-\gamma)\min(\nu_3(q),K)}.
\]
The trivial bound is \(H\), since \(|\widehat\mu|\le1\).  Thus, when
\(q\) is not highly divisible by \(3\), this estimate gives an improvement over the trivial bound.\\
We also prove the bilinear estimate, for nonzero integers
\(u,v\),
\[
        \sum_{M<n,m\le M+H}
        |\widehat\mu(u2^n+v2^m)|
        \ll
        H^{1+\gamma}
        3^{(1-\gamma)\min(\nu_3(u),\nu_3(v),K)}.
\]
These estimates are proved by combining the exact order of \(2\) modulo
powers of \(3\) with a finite averaging argument. The general strategy is inspired by Schmidt's treatment of
Fourier-product sums in his work on normal numbers 
\cite{Schmidt}, but the special pair of bases \((2,3)\) gives a more
direct and improved estimates.

After this work had been completed, we became aware
of the preprint of Dai, Li, Wang and Wu
\cite{dai2026metricresultsdyadicapproximation}, which also studies the same problem. The arguments and results in the
present paper were obtained independently.

\section{Averaged Fourier estimates}
For $q\in\ZZ$, the Fourier transform of $\mu$ at $q$ is given by 
\[
        \widehat\mu(q)=\int e(qx)\,d\mu(x) =\prod_{r=1}^{\infty}\frac{1+e(2q/3^r)}2=e(q/2)\prod_{r=1}^{\infty}\cos\left(\frac{2\pi q}{3^r}\right).
\]
Therefore
\begin{equation*}
        |\widehat\mu(q)|
        =\prod_{r=1}^{\infty}
        \left|\cos\left(\frac{2\pi q}{3^r}\right)\right|.
\end{equation*}
For $K\ge0$, define
\[
        P_K(q):=\prod_{r=1}^{K}
        \left|\cos\left(\frac{2\pi q}{3^r}\right)\right|,
        \qquad \text{ and }P_0(q):=1.
\]
Then
\begin{equation}\label{eq:muhatlepk}
        |\widehat\mu(q)|\le P_K(q)
\end{equation}
It is easy to see that if $q_1=q_2 \bmod 3^K$, then $P_K(q_1)=P_K(q_2)$. Hence $P_K(q)$ depends only on $q\bmod 3^K$.
\begin{lemma}\label{lem:cos}
For every $\theta\in\RR$,
\begin{equation}\label{eq:cos}
        \sum_{k=0}^{2}
        \left|\cos\left(\theta+\frac{2\pi k}{3}\right)\right|
        \le 2.
\end{equation}
\end{lemma}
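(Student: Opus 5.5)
The idea is to use the $\pi$-periodicity of $|\cos|$ to choose a window in which all three cosines are nonnegative, so the absolute values can be dropped. After that, a sum-to-product identity gives an exact formula for the sum.

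\emph{Step 1 (reduction).} Since $|\cos|$ has period $\pi$ and $\theta+\frac{4\pi}{3}\equiv\theta+\frac{\pi}{3}\pmod \pi$, we have
\[
\sum_{k=0}^{2}\left|\cos\left(\theta+\tfrac{2\pi k}{3}\right)\right|
=|\cos\theta|+\left|\cos\left(\theta-\tfrac{\pi}{3}\right)\right|+\left|\cos\left(\theta+\tfrac{\pi}{3}\right)\right|,
\]
using also $\theta+\frac{2\pi}{3}\equiv\theta-\frac{\pi}{3}\pmod\pi$. The right-hand side is the sum of $|\cos|$ over the three points $\theta,\theta\pm\pi/3$. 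Replacing $\theta$ by $\theta+\pi/3$ only permutes these points modulo $\pi$, so the function is periodic with period $\pi/3$. Hence it suffices to treat $\theta\in[-\pi/6,\pi/6]$.

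\emph{Step 2 (signs).} For $\theta\in[-\pi/6,\pi/6]$, the three angles $\theta$, $\theta-\pi/3$ and $\theta+\pi/3$ all lie in $[-\pi/2,\pi/2]$. Therefore every cosine is nonnegative and the absolute values may be removed.

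\emph{Step 3 (identity).} By $\cos(\theta+\alpha)+\cos(\theta-\alpha)=2\cos\theta\cos\alpha$ with $\alpha=\pi/3$, the sum equals
\[
\cos\theta+2\cos\theta\cdot\tfrac12=2\cos\theta\le 2.
\]
Equality holds at $\theta=0$.

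The only point needing care is the bookkeeping in Step 1: checking that the reduction modulo $\pi$ turns the three shifts into $\{0,\pm\pi/3\}$, and that the window of length $\pi/3$ keeps all three angles in the region where cosine is nonnegative. Steps 2 and 3 are routine once that is in place.
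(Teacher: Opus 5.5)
Your proof is correct, but it reaches the bound by a different route from the paper. The paper never locates the signs. It uses only that $A_k=\cos(\theta+2\pi k/3)$ satisfy $A_0+A_1+A_2=0$ and $|A_k|\le 1$. Since three reals summing to zero have at most one element in one of the two sign classes, the total positive part and the total negative part (which are equal) are each at most $1$, so $\sum|A_k|\le 2$.

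You instead use the $\pi$-periodicity of $|\cos|$ to reduce to the window $[-\pi/6,\pi/6]$, fix the sign pattern there, and evaluate the sum exactly. The two arguments rest on the same identity. Because $\cos(\theta+2\pi/3)=-\cos(\theta-\pi/3)$ and $\cos(\theta+4\pi/3)=-\cos(\theta+\pi/3)$, your relation $\cos(\theta+\pi/3)+\cos(\theta-\pi/3)=\cos\theta$ is exactly $A_0+A_1+A_2=0$. Your window is precisely the case ``one nonnegative term, two nonpositive terms'' that the paper handles abstractly.

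The paper's version is shorter and needs no case bookkeeping. It also proves a more general fact: any three numbers in $[-1,1]$ with zero sum have absolute sum at most $2$. Your version costs the periodicity reduction but gives more. The sum equals $2\cos\theta'$, where $\theta'$ is the representative of $\theta$ modulo $\pi/3$ in $[-\pi/6,\pi/6]$. So you also learn that the bound is attained exactly for $\theta\in\frac{\pi}{3}\mathbb{Z}$ and that the minimum value is $\sqrt3$.
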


\begin{proof}
Let
\[
        A_k=\cos\left(\theta+\frac{2\pi k}{3}\right),
        \qquad k=0,1,2.
\]
Then $A_0+A_1+A_2=0$ and $|A_k|\le1$.  Since the sum is zero, the sum of the positive terms is equal to sum of the absolute values of the negative terms and this sum is at most $1$ because any one term has absolute value at most $1$. Hence
\[
        |A_0|+|A_1|+|A_2|\le2
\]
and this proves \eqref{eq:cos}.
\end{proof}
\begin{lemma}\label{lem:induction}
For every $a,L\ge0$ and for every $b\in\ZZ$,
\begin{equation}\label{eq:induction}
        \sum_{0\le t<3^L}P_{a+L}(b+3^a t)
        \le 2^L.
\end{equation}   
\end{lemma}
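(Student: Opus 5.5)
Our plan is to prove the bound by induction on $L$, with $a\ge0$ and $b\in\ZZ$ arbitrary. For $L=0$ the sum has the single term $P_a(b)\le1$, since every factor is a $|\cos|$; this gives the bound $2^0=1$.

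For the inductive step, assume the statement for $L$ and consider $L+1$. Write $t=t'+3^L s$ with $0\le t'<3^L$ and $s\in\{0,1,2\}$. We then factor off the last cosine:
\[
P_{a+L+1}(b+3^at)=P_{a+L}(b+3^at)\cdot\left|\cos\left(\frac{2\pi(b+3^at)}{3^{a+L+1}}\right)\right|.
\]
The first factor depends only on the argument modulo $3^{a+L}$. Since $3^a t\equiv 3^a t'\pmod{3^{a+L}}$, it equals $P_{a+L}(b+3^at')$, which does not depend on $s$. In the second factor the argument is
\[
\frac{2\pi(b+3^at')}{3^{a+L+1}}+\frac{2\pi s}{3},
\]
so summing over $s\in\{0,1,2\}$ has exactly the form of Lemma~\ref{lem:cos}, with $\theta=2\pi(b+3^at')/3^{a+L+1}$. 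Lemma~\ref{lem:cos} bounds that inner sum by $2$. Hence
\[
\sum_{0\le t<3^{L+1}}P_{a+L+1}(b+3^at)\le 2\sum_{0\le t'<3^L}P_{a+L}(b+3^at')\le 2\cdot 2^L,
\]
where the last inequality is the inductive hypothesis with the same $a$ and $b$.

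The only delicate point is which cosine to peel off and how to split $t$. It has to be the highest-index factor $r=a+L+1$, paired with the top ternary digit $s$ of $t$. With that choice the remaining product is constant in $s$, and the three values of $s$ shift the peeled angle by exactly $2\pi/3$, which is what Lemma~\ref{lem:cos} needs. Peeling a lower factor or the low digit of $t$ would not give this clean separation. Once the split is set up correctly, the rest is routine.
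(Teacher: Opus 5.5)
Your proof is correct and follows the paper's argument: induction on $L$, splitting off the top ternary digit of $t$, peeling off the highest-index cosine factor $r=a+L+1$, using that $P_{a+L}$ depends only on the residue mod $3^{a+L}$, and applying Lemma~\ref{lem:cos} to the three angles shifted by $2\pi s/3$. The only difference is cosmetic: you index the inductive step as $L\to L+1$, while the paper writes it as $L-1\to L$.
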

\begin{proof}
We prove \eqref{eq:induction} by induction on $L$. If $L=0$, it is clear.  Now, suppose $L\ge1$ and write $t$ as
\[
        t=t_0+3^{L-1}\varepsilon,
        \qquad \text{ where } 0\le t_0<3^{L-1},\quad \text{ and }\varepsilon\in\{0,1,2\}.
\]
Then
\begin{align*}
   P_{a+L}(b+3^a t)&=P_{a+L}(b+3^at_0 +3^{a+L-1})\\
   &=P_{a+L-1}(b+3^at_0 +3^{a+L-1})\left|
        \cos\left(
        \frac{2\pi(b+3^a t_0)}{3^{a+L}}
        +\frac{2\pi\varepsilon}{3}
        \right)
        \right| \\
        &=P_{a+L-1}(b+3^a t_0)\left|
        \cos\left(
        \frac{2\pi(b+3^a t_0)}{3^{a+L}}
        +\frac{2\pi\varepsilon}{3}
        \right)
        \right|.
\end{align*}
where the last equality follows from the fact that $P_K(q)$ depends only on $q \bmod 3^K$.\\
Therefore,
\begin{align*}
    \sum_{0\le t<3^L}P_{a+L}(b+3^a t)&=\sum_{0\le t_0<3^{L-1}} P_{a+L-1}(b+3^a t_0)\sum_{\varepsilon=0}^2\left|
        \cos\left(
        \frac{2\pi(b+3^a t_0)}{3^{a+L}}
        +\frac{2\pi\varepsilon}{3}
        \right)
        \right|\\
        &\overset{\eqref{eq:cos}}{\leq} 2 \sum_{0\le t_0<3^{L-1}} P_{a+L-1}(b+3^a t_0) \leq 2^L 
\end{align*}
where the last inequality follows by induction hypothesis. This proves \eqref{eq:induction}.   
\end{proof}
We thank Simon Baker and Nikita Shulga for pointing out that one may
group \(j\ge2\) ternary digits at a time in the cosine-tree argument,
replacing the factor \(2^j\) by a smaller \(j\)-block constant.  This
would slightly improve the numerical exponents, but we keep the
one-digit version to retain the simple closed form
\(\gamma=\log2/\log3\).
\begin{remark}
We thank Simon Baker and Nikita Shulga for pointing out that the
above Lemma can be improved slightly by grouping \(j\ge2\) digits at a time instead of one. i.e. write
\[
        t=t_0+3^{L-j}\varepsilon,
        \qquad
        0\le t_0<3^{L-j},\quad 0\le \varepsilon<3^j.
\]
and estimate the contribution of the last $j$ factors in \(P_{a+L}\). This gives a small improvement in the numerical exponents. For example, when $j=2$ the one-digit bound \(2^L\) can be improved to approximately $2^L(0.9848)^L$. We keep the one digit estimate in order to retain the simple form.
\end{remark}
The remaining estimates in this section are inspired by Schmidt's method in his
work on normal numbers.  In particular, Schmidt used
order estimates modulo prime powers and residue-class multiplicity
bounds to control Fourier-product sums, see Lemma~4 and its Corollary,
and Lemmas~5--7 of \cite{Schmidt}.  In the present problem the
special pair \((r,s)=(2,3)\) gives a sharper and more elementary method. We also replace Schmidt's digit-pair estimates (Lemma 2 of \cite{Schmidt}) by a direct averaging
argument using Lemma \ref{lem:induction}.\\
Throughout, we write \(\beta:=1-\gamma\).
For a nonzero integer $q$, let
\[
        \nu_3(q)=\max\{a\ge0:3^a\mid q\}.
\]
Denote by \(\ordop_{3^r}(2)\) the multiplicative order of \(2\)
modulo \(3^r\), namely the least positive integer \(n\) for which
\[
        2^n\equiv 1 \pmod{3^r}.
\]
The following Lemma is a standard fact: see, for example,
~\cite[Chapter 10,
Theorem 10.6(a)]{Apostol1976}
\begin{lemma}\label{lem:order}
For every $r\ge1$, 
\[
        \ordop_{3^r}(2)=\varphi(3^r)=2\cdot 3^{r-1}.
\]
\end{lemma}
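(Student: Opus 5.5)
The plan is to show directly that $2$ is a primitive root modulo every power of $3$: the order of $2$ modulo $3^r$ divides $\varphi(3^r)=2\cdot 3^{r-1}$, and we rule out every proper divisor. The key intermediate fact is the exact $3$-adic valuation
\[
        \nu_3\bigl(4^{3^{k}}-1\bigr)=k+1 \qquad (k\ge0).
\]
We prove this by induction on $k$. For $k=0$ we have $4-1=3$. For the inductive step, set $x=4^{3^{k}}$, so that $x\equiv1\pmod 3$, and factor
\[
        x^3-1=(x-1)(x^2+x+1).
\]
Writing $x=1+3m$ gives $x^2+x+1=3+9m+9m^2\equiv3\pmod 9$, so $\nu_3(x^2+x+1)=1$ exactly, which raises the valuation by one. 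This is just the lifting-the-exponent lemma for $p=3$, done by hand.

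Now let $d=\ordop_{3^r}(2)$, so $d\mid 2\cdot3^{r-1}$. We first show $d$ is even. Since $2\equiv-1\pmod3$, we have $2^n\equiv(-1)^n\pmod 3$, and $2^d\equiv1\pmod3$ forces $d$ to be even. Hence $d=2\cdot3^{j}$ with $0\le j\le r-1$. Then $2^d=4^{3^j}$, and by the valuation fact $3^r\mid 4^{3^j}-1$ if and only if $j+1\ge r$. Therefore $j=r-1$, which gives $d=2\cdot3^{r-1}=\varphi(3^r)$.

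The only step needing care is the mod $9$ computation showing that the factor $x^2+x+1$ contributes exactly one power of $3$. It is routine, so there is no real obstacle. Alternatively, one could simply cite the standard result that if $g$ is a primitive root modulo $p^2$ then it is one modulo all $p^r$, and check the case $r=2$ directly: the powers of $2$ modulo $9$ are $2,4,8,7,5,1$, so $2$ has order $6$ modulo $9$.
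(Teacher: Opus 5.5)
Your proof is correct. The identity $\nu_3(4^{3^k}-1)=k+1$ follows from your factorisation and the congruence $x^2+x+1\equiv 3\pmod 9$. Combined with $2^n\equiv(-1)^n\pmod 3$, it singles out $2\cdot 3^{r-1}$ among the divisors of $\varphi(3^r)$.

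The paper does not actually prove this lemma. It cites it as a standard fact (Apostol, Theorem~10.6(a), on primitive roots modulo odd prime powers). The relevant content is that a primitive root $g$ modulo an odd prime $p$ with $g^{p-1}\not\equiv 1\pmod{p^2}$ is a primitive root modulo every $p^r$. This applies because $2$ is a primitive root mod $3$ and $2^2=4\not\equiv 1\pmod 9$.

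Your argument is essentially that textbook proof unpacked and specialised to $(p,g)=(3,2)$:
\begin{itemize}
\item Your inductive valuation computation, a case of the lifting-the-exponent lemma, is the lifting step behind the cited theorem.
\item Your parity step plays the role of $2$ being a primitive root mod $3$.
\end{itemize}
Your route is self-contained and also gives the exact valuation, which is slightly more than the lemma needs. The paper's citation is shorter. Your closing alternative, checking that $2$ has order $6$ mod $9$ and invoking the lifting theorem, is exactly the paper's route.
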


\begin{lemma}\label{lem:single}
Let $H\ge3$, and choose $K$ such that
\[
        3^K\le H<3^{K+1}.
\]
Then, for every $M\ge0$ and $q\in\ZZ\setminus\{0\}$,
\begin{equation}\label{eq:single}
        \sum_{M<n\le M+H}|\widehat\mu(q2^n)|
        \ll H^\gamma 3^{\beta\min(\nu_3(q),K)}.
\end{equation}
The implicit constant is absolute.
\end{lemma}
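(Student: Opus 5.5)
The plan is to factor out the power of $3$ dividing $q$, bound $|\widehat\mu(q2^n)|$ by a finite product that depends only on a residue modulo $3^K$, and then use Lemma~\ref{lem:order} to see that along one full period in $n$ these residues are distinct units. Lemma~\ref{lem:induction} then bounds each period sum, and counting periods finishes the argument.

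\emph{Reduction.} Write $q=3^a q'$ with $a=\nu_3(q)$ and $3\nmid q'$. For $r\le a$ the factor $|\cos(2\pi q2^n/3^r)|$ equals $1$. Hence
\[
|\widehat\mu(q2^n)|=\prod_{s\ge1}\left|\cos\left(\tfrac{2\pi q'2^n}{3^s}\right)\right|=|\widehat\mu(q'2^n)|.
\]
This is just the self-similarity $\widehat\mu(3q)=\widehat\mu(q)$. By \eqref{eq:muhatlepk}, $|\widehat\mu(q2^n)|\le P_K(q'2^n)$, and $P_K(q'2^n)$ depends only on $q'2^n \bmod 3^K$. (If one prefers not to use this reduction, the case $a\ge K$ is handled trivially: $H<3^{K+1}$ gives $H=H^\gamma H^\beta\le 3^\beta H^\gamma 3^{\beta K}$. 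For $a<K$ one can work with $P_{a+K}(q2^n)$ directly, which amounts to the same thing.)

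\emph{One period.} Assume $K\ge1$, which holds since $H\ge3$. By Lemma~\ref{lem:order}, $2$ has order $T:=2\cdot3^{K-1}=\varphi(3^K)$ modulo $3^K$. Since $q'$ is a unit modulo $3^K$, as $n$ runs over any block of $T$ consecutive integers the residues $q'2^n \bmod 3^K$ run over all units modulo $3^K$, each exactly once. Since $P_K\ge0$, we can enlarge the sum to all residues. Then Lemma~\ref{lem:induction} with $a=0$, $b=0$, $L=K$ gives
\[
\sum_{N<n\le N+T}P_K(q'2^n)\le\sum_{0\le t<3^K}P_K(t)\le 2^K
\]
for every $N$.

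\emph{Counting periods.} The interval $(M,M+H]$ is covered by at most $H/T+1\le \tfrac{3}{2}\cdot3+1$ blocks of length $T$, because $H<3^{K+1}=\tfrac92 T$. Hence the sum in \eqref{eq:single} is $\ll 2^K=(3^K)^\gamma\le H^\gamma$. This is at most $H^\gamma 3^{\beta\min(\nu_3(q),K)}$, with an absolute constant.

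The only point needing care is the second step. One must check that the exact order in Lemma~\ref{lem:order} makes $n\mapsto q'2^n \bmod 3^K$ injective on a full period with image in the units, so that positivity of $P_K$ allows comparison with the complete residue sum handled by Lemma~\ref{lem:induction}. The rest is bookkeeping of the number of periods.
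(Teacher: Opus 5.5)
Your proof is correct. It differs from the paper's in one substantive step, and that step gives a stronger result.

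The paper truncates at absolute depth $K$ and bounds by $P_K(q2^n)$. With $a=\min(\nu_3(q),K)$, the first $a$ cosine factors are then identically $1$. So the paper only tracks $q_02^n$ modulo $3^{K-a}$ and notes that each unit residue is hit $O(3^a)$ times in the window. Lemma~\ref{lem:induction} with $L=K-a$ then gives $3^a2^{K-a}=2^K(3/2)^a$, and the case $a=K$ is treated trivially.

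You instead strip the $3$-adic part of $q$ first, using $|\widehat\mu(3^aq')|=|\widehat\mu(q')|$. Equivalently, you truncate at depth $a+K$, since $P_{a+K}(q2^n)=P_K(q'2^n)$. This keeps all $K$ levels nontrivial. The window then contains $O(1)$ full periods of $2$ modulo $3^K$, and Lemma~\ref{lem:induction} with $a=b=0$, $L=K$ gives $2^K\le H^\gamma$.

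So you prove the uniform bound $\sum_{M<n\le M+H}|\widehat\mu(q2^n)|\ll H^\gamma$ with no case split. This shows the factor $3^{\beta\min(\nu_3(q),K)}$ in the statement is an artifact of truncating at fixed depth. The same reduction, applied to $u2^n+v2^m=3^{a}(u'2^n+v'2^m)$ with $a=\min(\nu_3(u),\nu_3(v))$, removes the corresponding factor in the bilinear lemma too. There you fix $n$, let $m$ vary, and apply Lemma~\ref{lem:induction} with $b=u'2^n$.

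The paper's weaker form costs nothing downstream. The weights $3^{\beta\nu_3(\ell)}$ are absorbed by the coefficient-sum lemma, so the exponents $2-\gamma$ and $(1-\gamma)/2$ are the same either way. Your version would simply let that lemma be replaced by the plain bound $\sum_{\ell\ne0}|a^{\pm}_{\ell,R}|\ll1$.
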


\begin{proof}
By \eqref{eq:muhatlepk}, it is enough to estimate
\[
        \sum_{M<n\le M+H}P_K(q2^n).
\]
Put $a=\min(\nu_3(q),K)$.\\
\textbf{Case 1}: $a=K$.\\
$$H=H^{\gamma}H^{\beta}<H^{\gamma}3^{(K+1)\beta}\ll H^{\gamma}3^{K\beta}$$
Since $P_K(q2^n)\leq 1$, we have
\[
        \sum_{M<n\le M+H}P_K(q2^n)\le H \ll H^{\gamma}3^{K\beta}=H^{\gamma}3^{\beta a}
\]
so \eqref{eq:single} follows in this case.\\
\textbf{Case 2}: $a<K$.\\
Write \(q=3^a q_0\), where \(3\nmid q_0\). 
By Lemma \ref{lem:order},
\[
        \operatorname{ord}_{3^{K-a}}(2)
        =
        \varphi(3^{K-a})
        =
        2\cdot 3^{K-a-1}.
\]
Therefore
\[
        \bigl\{
        2^n \bmod 3^{K-a}:
        0\le n<\varphi(3^{K-a})
        \bigr\}
        =
        (\mathbb Z/3^{K-a}\mathbb Z)^\times.
\]
Since \(3\nmid q_0\), multiplication by \(q_0\) is a bijection of the
unit group \((\mathbb Z/3^{K-a}\mathbb Z)^\times\).  
Therefore
\begin{equation}\label{eq:order}
    \bigl\{
        q_0 2^n \bmod 3^{K-a}:
        0\le n<\varphi(3^{K-a})
        \bigr\}
        =
        (\mathbb Z/3^{K-a}\mathbb Z)^\times.
\end{equation}
For $y\in (\mathbb Z/3^{K-a}\mathbb Z)^\times$, define
\[
        \mathcal N(y)
        :=
        \left\{
        n\in\mathbb Z:
        M<n\le M+H,\ 
        q_0 2^n\equiv y \pmod {3^{K-a}}
        \right\}.
\]
Then, by \eqref{eq:order}
\[
       \# \mathcal N(y)
        \le
        1+\frac{H}{2 \cdot 3^{K-a-1}} \leq  1+\frac{3^{K+1}}{2 \cdot 3^{K-a-1}}\ll 3^a
\]
Now, for $n\in \mathcal N(y)$, we have
\[
        q2^n
        =
        3^a q_0 2^n
        \equiv
        3^a y
        \pmod {3^K}.
\]
Hence,
\[
        P_K(q2^n)=P_K(3^a y).
\]
Thus, grouping the sum according to the residue class
\(y=q_0 2^n\bmod 3^{K-a}\), we obtain
\[
\begin{aligned}
        \sum_{M<n\le M+H}P_K(q2^n)
        &=\sum_{y\in (\mathbb Z/3^{K-a}\mathbb Z)^\times} \sum_{n\in \mathcal N(y)}P_K(q2^n)\\
        &\leq \sum_{y\in (\mathbb Z/3^{K-a}\mathbb Z)^\times} \# \mathcal N(y)P_K(3^a y)\\
        &\ll
        3^a
        \sum_{0\le y<3^{K-a}}
        P_K(3^a y).
\end{aligned}
\]
By Lemma \ref{lem:cos}, with $b=0$ and $L=K-a$,
\[
        \sum_{0\le y<3^{K-a}}P_K(3^a y)
        \le 2^{K-a}.
\]
Thus
\[
        \sum_{M<n\le M+H}P_K(q2^n)
        \ll 3^a2^{K-a}
        =2^K\left(\frac32\right)^a.
\]
Since $2^K \ll H^\gamma$ and
$
        \left(\frac32\right)^a=3^{\beta a},
$
we obtain \eqref{eq:single}.
\end{proof}
\begin{lemma}[Bilinear Fourier estimate]\label{lem:bilinear}
Let $H\ge3$, and choose $K$ by $3^K\le H<3^{K+1}$.  Then, for every $M\ge0$ and nonzero integers $u,v$,
\begin{equation}\label{eq:bilinear}
        \sum_{M<n,m\le M+H}
        |\widehat\mu(u2^n+v2^m)|
        \ll
        H^{1+\gamma}
        3^{\beta\min(\nu_3(u),\nu_3(v),K)}.
\end{equation}
The implicit constant is absolute.
\end{lemma}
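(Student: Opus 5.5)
The plan is to reduce the double sum to single sums of the type handled in Lemma~\ref{lem:single}, but with an inhomogeneous shift. The shift is then absorbed using the fact that Lemma~\ref{lem:induction} holds for \emph{every} $b\in\ZZ$, not only $b=0$.

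By \eqref{eq:muhatlepk} it suffices to bound $\sum_{M<n,m\le M+H}P_K(u2^n+v2^m)$. Fix $m$ and put $b=v2^m$. We then bound the inner sum $\sum_{M<n\le M+H}P_K(u2^n+b)$ uniformly in $b$. Let $a_u=\min(\nu_3(u),K)$.

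If $a_u=K$, the trivial bound $H\ll H^\gamma 3^{\beta K}$ applies, exactly as in Case~1 of Lemma~\ref{lem:single}.

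If $a_u<K$, write $u=3^{a_u}u_0$ with $3\nmid u_0$. Proceed as in Case~2 of Lemma~\ref{lem:single}:
\begin{itemize}
\item By Lemma~\ref{lem:order}, $u_02^n \bmod 3^{K-a_u}$ runs through the units with period $2\cdot3^{K-a_u-1}$. Hence each residue class $y$ is hit by $\ll 3^{a_u}$ values of $n$ in the window.
\item For such $n$ we have $u2^n+b\equiv b+3^{a_u}y \pmod{3^K}$, so $P_K(u2^n+b)=P_K(b+3^{a_u}y)$.
\item Enlarge the sum over units $y$ to all $0\le y<3^{K-a_u}$, which is allowed since the terms are nonnegative.
\end{itemize}
This gives
\[
\sum_{M<n\le M+H}P_K(u2^n+b)\ll 3^{a_u}\sum_{0\le y<3^{K-a_u}}P_K(b+3^{a_u}y)\le 3^{a_u}2^{K-a_u}\ll H^\gamma 3^{\beta a_u},
\]
where the middle inequality is Lemma~\ref{lem:induction} with $a=a_u$, $L=K-a_u$ and this $b$. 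Summing over the at most $H+1$ values of $m$ then gives the bound $H^{1+\gamma}3^{\beta\min(\nu_3(u),K)}$.

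The argument is symmetric in the two variables. Fixing $n$ instead, with shift $b=u2^n$, and summing over $m$ first gives $H^{1+\gamma}3^{\beta\min(\nu_3(v),K)}$. Taking the smaller of the two bounds yields \eqref{eq:bilinear}, since $\min(\min(\nu_3(u),K),\min(\nu_3(v),K))=\min(\nu_3(u),\nu_3(v),K)$. All implied constants come from Lemma~\ref{lem:single}'s counting and are absolute.

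The only point needing care is that the shift $b$ can be an arbitrary integer, with no $3$-adic relation to $u$. This is exactly why Lemma~\ref{lem:induction} was stated for general $b$, so I expect no genuine obstacle. Note also that $u2^n+v2^m$ may vanish for some pairs. There $|\widehat\mu|=1$, but the bound via $P_K$ still holds because $P_K(0)=1$ is simply one of the terms counted.
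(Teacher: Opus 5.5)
Your proposal is correct and is essentially the paper's proof: you treat one term as an arbitrary shift $b$, count the residues of the other variable's $2$-power orbit modulo $3^{K-a}$ using Lemma~\ref{lem:order}, and then apply Lemma~\ref{lem:induction} with general $b$, exactly as the paper does with $b=u2^n$. The only cosmetic difference is that the paper assumes without loss of generality that $\nu_3(v)\le\nu_3(u)$ and sums over $m$ first, whereas you carry out both orderings and take the minimum, which is equivalent.
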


\begin{proof}
Again we use $|\widehat\mu|\le P_K$.  Let
\[
        a=\min(\nu_3(u),\nu_3(v),K).
\]
If $a=K$, the trivial bound gives $H^2$, while
\[
        H^{1+\gamma}3^{\beta K}\asymp H^{1+\gamma}H^\beta=H^2.
\]
and hence in this case \eqref{eq:bilinear} follows.\\
Now assume $a<K$. WLOG, suppose that $a=\nu_3(v)\le\nu_3(u)$.  Write $v=3^a v_0$ with $3\nmid v_0$.

Fix $n$. By the same argument as in previous lemma, we see that as $m$ varies, $v_0 2^m$ runs through the unit group modulo $3^{K-a}$ with period $2\cdot3^{K-a-1}$.  Since $H\asymp3^K$, each unit residue occurs $O(3^a)$ times. Therefore
\[
\begin{aligned}
        \sum_{M<m\le M+H}P_K(u2^n+v2^m)
        &\ll 3^a
        \sum_{0\le y<3^{K-a}}P_K(u2^n+3^a y).
\end{aligned}
\]
By Lemma \ref{lem:cos}, with $b=u2^n$ and $L=K-a$, we have
\[
        \sum_{0\le y<3^{K-a}}P_K(u2^n+3^a y)
        \le 2^{K-a}.
\]
Hence for each fixed $n$,
\[
        \sum_{M<m\le M+H}P_K(u2^n+v2^m)
        \ll 3^a2^{K-a}.
\]
Therefore,
\[
        \sum_{M<n,m\le M+H}P_K(u2^n+v2^m)
        \ll H3^a2^{K-a}
        =H2^K\left(\frac32\right)^a.
\]
Since $2^K\asymp H^\gamma$ and $(3/2)^a=3^{\beta a}$, this proves \eqref{eq:bilinear}.
\end{proof}

\section{Smooth approximation and coefficient sums}
\begin{lemma}\label{lem:smooth-bump}
Let \(0<R<1/4\),  
and
\[
        I_R=\{t\in\mathbb T:\|t\|<R\}.
\]
For every \(0<\varepsilon<1\), there exists smooth functions
\[
        g_{R}^{-},g_{R}^{+}\in C^\infty(\mathbb T)
\]
such that
\[
        0\le g_{R}^{-}(t)\le \chi_{I_R}(t)
        \le g_{R}^{+}(t)
        \qquad(t\in\mathbb T).
\]
The Fourier coefficients of the functions $g_{R}^{\pm}$ denoted by
$$a_{\ell,R}^{\pm}
        :=
        \int_{\mathbb T}g_{R}^{\pm}(t)e(-\ell t)\,dt$$
satisfy the following:
\begin{equation}\label{eq:a0R}
    a_{0,R}^{-}=c_-R,
        \qquad
        a_{0,R}^{+}=c_+R,
\end{equation}
where \(c_-,c_+>0\) are fixed constants independent of \(R\).
Moreover, for every \(A>1\),
\begin{equation}\label{eq:alR}
   |a_{\ell,R}^{\pm}|
        \ll_{A} R(1+R|\ell|)^{-A}
        \qquad(\ell\in\mathbb Z) 
\end{equation}
\end{lemma}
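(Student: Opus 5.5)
The plan is to build both functions by rescaling two fixed bump functions on $\RR$ and then periodizing. First fix once and for all two functions $\phi^-,\phi^+\in C_c^\infty(\RR)$ with
\[
0\le\phi^-\le\chi_{(-1,1)}\le\phi^+ ,
\]
with $\phi^-$ supported in $[-1/2,1/2]$ and equal to $1$ on $[-1/4,1/4]$, and $\phi^+$ supported in $[-2,2]$ and equal to $1$ on $[-1,1]$. Put $c_\pm:=\int_\RR\phi^\pm(x)\,dx$. Both constants are positive, because $\phi^-\equiv1$ on $[-1/4,1/4]$ and $\phi^+\ge\chi_{(-1,1)}$.

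Next, for $0<R<1/4$ define $g_R^\pm(t):=\phi^\pm(t/R)$ for $t\in[-1/2,1/2)$ and extend $1$-periodically. The support of $\phi^\pm(\cdot/R)$ lies in $[-2R,2R]\subset(-1/2,1/2)$, since $R<1/4$. So the periodization has no overlap and $g_R^\pm\in C^\infty(\mathbb T)$. The sandwich
\[
0\le g_R^-\le\chi_{I_R}\le g_R^+
\]
holds pointwise: on the fundamental domain $I_R$ corresponds to $|t|<R$, that is $|t/R|<1$, and we use the defining inequalities of $\phi^\pm$.

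For the coefficients, the compact support inside one period gives
\[
a^\pm_{\ell,R}=\int_\RR\phi^\pm(t/R)e(-\ell t)\,dt=R\,\widehat{\phi^\pm}(R\ell),
\]
where $\widehat\phi(\xi)=\int\phi(x)e(-\xi x)\,dx$. Taking $\ell=0$ yields \eqref{eq:a0R} with $c_\pm=\widehat{\phi^\pm}(0)$, which is independent of $R$. For \eqref{eq:alR}, since $\phi^\pm$ is Schwartz, integrating by parts $N$ times gives
\[
|\widehat{\phi^\pm}(\xi)|\le(2\pi|\xi|)^{-N}\|(\phi^\pm)^{(N)}\|_{L^1}.
\]
Combining this with the trivial bound $|\widehat{\phi^\pm}|\le\|\phi^\pm\|_{L^1}$ gives $|\widehat{\phi^\pm}(\xi)|\ll_N(1+|\xi|)^{-N}$. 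Take $N=\lceil A\rceil$ and $\xi=R\ell$, and multiply by $R$.

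No step is a genuine obstacle. The only points needing care are two. First, the hypothesis $R<1/4$ is what guarantees that the scaled support fits inside a single period, so the torus coefficient equals the Fourier transform on $\RR$ exactly. Second, the dependence of the implied constant on $A$ arises only through the fixed derivatives of $\phi^\pm$, so it is uniform in $R$. The auxiliary parameter $\varepsilon$ in the statement plays no role in this construction; the bounds hold uniformly.
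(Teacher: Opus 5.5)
Your proposal is correct and is essentially the paper's own proof. Both fix bumps $\phi^\pm\in C_c^\infty(\RR)$, rescale by $R$, periodize ($R<1/4$ prevents overlap), compute $a^\pm_{\ell,R}=R\,\widehat{\phi^\pm}(R\ell)$, and use the rapid decay of $\widehat{\phi^\pm}$. Your extra requirement that $\phi^-\equiv1$ on $[-1/4,1/4]$ is a small improvement: it actually guarantees $c_->0$, which the paper only asserts.
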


\begin{proof}

Choose functions \(\phi^-,\phi^+\in C_c^\infty(\mathbb R)\) such that
\[
        0\le \phi^-\le1,
        \qquad
        \operatorname{supp}\phi^-\subset(-1,1),     
\]
and
\[
        \phi^+\ge0,
        \qquad
        \phi^+(u)\ge1\quad(|u|\le1),
        \qquad
        \operatorname{supp}\phi^+\subset(-2,2),
\]
Define
\[
   c_-:=\int_{\mathbb R}\phi^-(u)\,du>0,  \qquad    c_+:=\int_{\mathbb R}\phi^+(u)\,du<\infty.
\]
For \(0<R<1/4\), define functions on \(\mathbb T\) by periodization:
\[
        g_{R}^{\pm}(t)
        =
        \sum_{k\in\mathbb Z}
        \phi^{\pm}\!\left(\frac{t+k}{R}\right).
\]
It follows from the above properties of $\phi^{\pm}$ that
\[
        0\le g_{R}^{-}(t)\le \chi_{I_R}(t)
        \le g_{R}^{+}(t)
        \qquad(t\in\mathbb T),
\]
We now compute the Fourier coefficients.
\begin{align*}
    a_{\ell,R}^{\pm}
        &=
        \int_{\mathbb T}g_{R}^{\pm}(t)e(-\ell t)\,dt\\
        &=\int_{\mathbb R}
        \phi^{\pm}\!\left(\frac{t}{R}\right)e(-\ell t)\,dt\\
        &=   
        R 
        \int_{\mathbb R}\phi^{\pm}(u)e(-R\ell u)\,du\\
        &=
        R \widehat{\phi^{\pm}}(R\ell)
\end{align*}
where
\[
        \widehat{\phi^{\pm}}(\xi)
        =
        \int_{\mathbb R}\phi^{\pm}(u)e(-\xi u)\,du.
\]
In particular,
\[
        a_{0,R}^{-}
        =
        R\int_{\mathbb R}\phi^{-}(u)\,du=c_- R
\]
and
\[
        a_{0,R}^{+}
        =
        R\int_{\mathbb R}\phi^{+}(u)\,du=c_+ R.   
\]

Since \(\phi^{\pm}\in C_c^\infty(\mathbb R)\), for every \(A>1\),
\[
        |\widehat{\phi^{\pm}}(\xi)|\ll_A (1+|\xi|)^{-A}.
\]
Therefore
\[
        |a_{\ell,R}^{\pm}|\ll_A
        R(1+R|\ell|)^{-A}.
\]
\end{proof}

\begin{lemma}(Coefficient sums) \label{lem: coeff sums}
The Fourier coefficients $a_{l,R}^{\pm}$ of $g_R^{\pm}$ from the above lemma satisfy the following inequalities:
\begin{equation}\label{eq:alR single}
    \sum_{\ell\ne0}
        |a_{\ell,R}^{\pm}|\,3^{\beta\nu_3(\ell)}
        \ll 1, 
\end{equation}
    
and
\begin{equation}\label{eq:alR double}
     \sum_{\ell,j\ne0}
        |a_{\ell,R}^{\pm}|\,|a_{j,R}^{\pm}|\,
        3^{\beta\min(\nu_3(\ell),\nu_3(j))}
        \ll 1.
\end{equation}
The implicit constants are absolute.
\end{lemma}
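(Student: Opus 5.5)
The plan is to insert the decay bound \eqref{eq:alR} into each sum and compare with an integral. The weights $3^{\beta\nu_3(\ell)}$ are not bounded, so they must be controlled on average. We use dyadic-type decomposition by the $3$-adic valuation of $\ell$, \emph{not} a pointwise bound.

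\textbf{Single sum.} For \eqref{eq:alR single}, split the nonzero $\ell$ according to $a=\nu_3(\ell)$ and write $\ell=3^a m$ with $3\nmid m$. Then
\[
\sum_{\ell\ne0}|a_{\ell,R}^{\pm}|3^{\beta\nu_3(\ell)}
\ll_A \sum_{a\ge0}3^{\beta a}\sum_{m\ne0}R\,(1+R3^a|m|)^{-A}.
\]
The inner sum is handled by comparison with an integral.
\begin{itemize}
\item If $R3^a\le1$, it is $\ll R\cdot (R3^a)^{-1}=3^{-a}$.
\item If $R3^a>1$, it is $\ll R(R3^a)^{-A}$.
\end{itemize}
In a uniform form, the inner sum is $\ll 3^{-a}\min(1,(R3^a)^{1-A})$.

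The total is therefore
\[
\ll\sum_{a\ge0}3^{(\beta-1)a}\min\bigl(1,(R3^a)^{1-A}\bigr)
\le\sum_{a\ge0}3^{-\gamma a}\ll1,
\]
since $\beta-1=-\gamma<0$. So the single sum is actually easy: the weight $3^{\beta a}$ is beaten by the density $3^{-a}$ of multiples of $3^a$. We fix $A=2$ throughout, which is enough.

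\textbf{Double sum.} For \eqref{eq:alR double}, use $3^{\beta\min(x,y)}\le 3^{\beta x/2}3^{\beta y/2}$, which holds because $\min(x,y)\le (x+y)/2$. This factorises the double sum as the square of $\sum_{\ell\ne0}|a_{\ell,R}^{\pm}|3^{\beta\nu_3(\ell)/2}$. The same valuation decomposition bounds this by $\sum_a 3^{(\beta/2-1)a}\ll1$.

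One could alternatively use $\min\le x$, which gives the product of \eqref{eq:alR single} with $\sum_{j\ne0}|a_{j,R}^{\pm}|$. That needs $\sum_{j\ne0}|a_{j,R}|\ll1$, which also follows from the $a=0$ style computation $\sum_{j}R(1+R|j|)^{-2}\ll1$. Either route gives an absolute constant, with $A=2$ fixed.

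\textbf{Main obstacle.} The only delicate point is the uniformity in $R$ of the inner sum $\sum_{m\ne0}R(1+R3^a|m|)^{-A}\ll 3^{-a}$. This has to hold in both regimes $R3^a\lessgtr1$ with constants independent of $R$ and $a$.

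For it, compare with $\int_0^\infty R(1+R3^a x)^{-A}\,dx=3^{-a}/(A-1)$. The monotone-decreasing integrand gives $\sum_{m\ge1}f(m)\le\int_0^\infty f$, so the bound $3^{-a}/(A-1)$ in fact holds in both regimes at once. This removes the case split entirely.
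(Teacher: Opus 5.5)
Your argument is correct. For \eqref{eq:alR single} it coincides with the paper's proof. You decompose by $a=\nu_3(\ell)$, drop the condition $3\nmid m$, and use $\sum_{m\ne0}R(1+R3^a|m|)^{-2}\ll 3^{-a}$ uniformly in $R$ and $a$. The paper states this as $\sum_{m\ne0}(1+B|m|)^{-2}\ll B^{-1}$ for all $B>0$, which is exactly your monotone integral comparison. Both arrive at $\sum_a 3^{-\gamma a}$.

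For \eqref{eq:alR double} you take a different route. The paper majorizes the weight by a divisor sum,
\[
3^{\beta\min(\nu_3(\ell),\nu_3(j))}\le\sum_{a\ge0,\ 3^a\mid\ell,\ 3^a\mid j}3^{\beta a},
\]
which turns the double sum into
\[
\sum_{a}3^{\beta a}\Bigl(\sum_{\ell\ne0,\ 3^a\mid\ell}|a^{\pm}_{\ell,R}|\Bigr)^2\ll\sum_a 3^{-(2-\beta)a}.
\]
That argument uses the fact that both indices are divisible by the same $3^a$. You instead decouple pointwise with $\min(x,y)\le(x+y)/2$, or with $\min(x,y)\le x$, so the double sum factors into single sums you have already handled.

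Your route is slightly more economical. It makes \eqref{eq:alR double} a formal consequence of the single-sum estimate: either with $\beta/2$ in place of $\beta$, or together with the easy bound $\sum_{j\ne0}|a^{\pm}_{j,R}|\ll1$. Your AM--GM version also converges in the same range $\beta<2$ as the paper's divisor-sum argument. The paper's version gains only faster geometric decay in $a$, which does not matter for an $O(1)$ bound.
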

\begin{proof}
   Fix $A=2$, then by $\eqref{eq:alR}$ we have
   $$|a_{\ell,R}^{\pm}|\ll R(1+R|\ell|)^{-2}$$
   Therefore,
\[
\begin{aligned}
        \sum_{\ell\ne0}
        |a_{\ell,R,y}^{\pm}|\,3^{\beta\nu_3(\ell)}
        &\ll
        R
        \sum_{\ell\ne0}
        (1+R|\ell|)^{-2}3^{\beta\nu_3(\ell)}.
\end{aligned}
\]
Write \(\ell=3^a m\), where \(a\ge0\) and \(3\nmid m\).  Then
\[
\begin{aligned}
        R
        \sum_{\ell\ne0}
        (1+R|\ell|)^{-A}3^{\beta\nu_3(\ell)}
        &\le
        R
        \sum_{a\ge0}3^{\beta a}
        \sum_{m\ne0}(1+R3^a|m|)^{-2}.
\end{aligned}
\] 
Since for every $B>0$, 
\[
        \sum_{m\ne0}(1+B|m|)^{-2}\ll B^{-1},
\]
Taking \(B=R3^a\), we get
\[
\begin{aligned}
        R
        \sum_{a\ge0}3^{\beta a}
        \sum_{m\ne0}(1+R3^a|m|)^{-2}
        &\ll
        R
        \sum_{a\ge0}3^{\beta a}(R3^a)^{-1}  \\
        &=
        \sum_{a\ge0}3^{(\beta-1)a}.
\end{aligned}
\]
Since \(\beta=1-\gamma<1\),
\[
        \sum_{a\ge0}3^{(\beta-1)a}
        =
        \sum_{a\ge0}3^{-\gamma a}
        <\infty.
\]
Thus
\[
        \sum_{\ell\ne0}
        |a_{\ell,R}^{\pm}|\,3^{\beta\nu_3(\ell)}
        \ll 1.
\]

For the double sum, we use the inequality
\[
        3^{\beta\min(\nu_3(\ell),\nu_3(j))}\leq 
        \sum_{\substack{a\ge0\\3^a\mid \ell,\ 3^a\mid j}}
        3^{\beta a}.
\]
Therefore
\[
\begin{aligned}
        \sum_{\ell,j\ne0}
        |a_{\ell,R}^{\pm}|\,|a_{j,R}^{\pm}|\,
        3^{\beta\min(\nu_3(\ell),\nu_3(j))}                       \leq 
        \sum_{a\ge0}3^{\beta a}
        \left(
        \sum_{\substack{\ell\ne0\\3^a\mid \ell}}
        |a_{\ell,R}^{\pm}|
        \right)^2.
\end{aligned}
\]
For each \(a\ge0\),
\[
\begin{aligned}
        \sum_{\substack{\ell\ne0\\3^a\mid \ell}}
        |a_{\ell,R}^{\pm}|
        &\ll
        R\sum_{m\ne0}(1+R3^a|m|)^{-2}       \\
        &\ll
        R(R3^a)^{-1}
        =
        3^{-a}.
\end{aligned}
\]
Thus
\[
\begin{aligned}
        \sum_{\ell,j\ne0}
        |a_{\ell,R}^{\pm}|\,|a_{j,R}^{\pm}|\,
        3^{\beta\min(\nu_3(\ell),\nu_3(j))}                       \ll 
        \sum_{a\ge0}3^{\beta a}3^{-2a}
        =
        \sum_{a\ge0}3^{-(2-\beta)a}.
\end{aligned}
\]
Since \(2-\beta=1+\gamma>0\), this sum converges.  Hence
\[
        \sum_{\ell,j\ne0}
        |a_{\ell,R,y}^{\pm}|\,|a_{j,R,y}^{\pm}|\,
        3^{\beta\min(\nu_3(\ell),\nu_3(j))}
        \ll 1.
\]
\end{proof}


\section{Convergence estimates}
For \(r>0\), define
\[
        A_n(r)
        =
        \{x\in C:\|2^n x\|<r\}.
\]
\begin{lemma}[Coarse first moment]\label{lem:coarse-first-moment}
Let $N\ge3$,
and put $ \delta_N=N^{-\beta}$. Then
\begin{equation}\label{eq:coarse-first-moment}
        \sum_{N<n\le2N}\mu(A_n(\delta_N))\ll  N^\gamma.
\end{equation}
\end{lemma}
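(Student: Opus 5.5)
We majorize $\chi_{A_n(\delta_N)}$ by the smooth function $g^+_{R}$ from Lemma~\ref{lem:smooth-bump} with $R=\delta_N$, and expand in Fourier series. We may assume $N$ is large enough that $\delta_N=N^{-\beta}<1/4$; for the finitely many smaller $N$ the bound $\sum_{N<n\le 2N}\mu(A_n(\delta_N))\le N\ll N^\gamma$ is trivial. Since $g^+_R$ is smooth, its Fourier series converges absolutely, and we have
\[
        \mu(A_n(\delta_N))\le\int g^+_{\delta_N}(2^n x)\,d\mu(x)
        =\sum_{\ell\in\ZZ}a^+_{\ell,\delta_N}\,\widehat\mu(\ell 2^n).
\]
(Here $\widehat\mu(\ell 2^n)$ is evaluated with whichever sign convention matches; only absolute values matter below.) Summing over $N<n\le 2N$ and separating the $\ell=0$ term gives
\[
        \sum_{N<n\le2N}\mu(A_n(\delta_N))
        \le c_+\delta_N N+\sum_{\ell\ne0}|a^+_{\ell,\delta_N}|\sum_{N<n\le 2N}|\widehat\mu(\ell2^n)|.
\]
The main term is $c_+N^{-\beta}N=c_+N^{\gamma}$ by \eqref{eq:a0R}, which is already of the required size.

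For the error term, we apply Lemma~\ref{lem:single} with $M=N$ and $H=N$ to each inner sum. Using $\min(\nu_3(\ell),K)\le\nu_3(\ell)$, this gives
\[
        \sum_{N<n\le 2N}|\widehat\mu(\ell2^n)|\ll N^\gamma 3^{\beta\nu_3(\ell)}.
\]
The error term is therefore
\[
        \ll N^\gamma\sum_{\ell\ne0}|a^+_{\ell,\delta_N}|\,3^{\beta\nu_3(\ell)}\ll N^\gamma,
\]
by \eqref{eq:alR single} of Lemma~\ref{lem: coeff sums}, whose implicit constant is uniform in $R$. Combining the two terms yields \eqref{eq:coarse-first-moment}.

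The only delicate point is that the error bound must not depend on $R$. Lemma~\ref{lem: coeff sums} supplies exactly this uniformity, and it works because the weight $3^{\beta\nu_3(\ell)}$ grows more slowly than the density $3^{-a}$ of multiples of $3^a$. The choice $\delta_N=N^{-\beta}$ is precisely what balances the main term $\delta_N N$ against the Fourier error $N^\gamma$. So the proof is a direct assembly of earlier results, with no real obstacle beyond checking the uniformity of constants.
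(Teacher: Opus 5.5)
Your proposal is correct and follows the paper's proof essentially step for step: majorize by $g^+_{\delta_N}$, expand in Fourier series, bound the zero mode by $c_+N\delta_N=c_+N^\gamma$, and control the nonzero modes via Lemma~\ref{lem:single} (with $M=H=N$ and $\min(\nu_3(\ell),K)\le\nu_3(\ell)$) together with the $R$-uniform bound \eqref{eq:alR single}. Your preliminary reduction to $N$ large enough that $\delta_N<1/4$ is a small point the paper passes over: Lemma~\ref{lem:smooth-bump} requires it, and it is not automatic for small $N\ge3$, since $\delta_3=3^{-\beta}=2/3$. Your trivial bound for the remaining finitely many $N$ settles it.
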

\begin{proof}

Let \(g_{\delta_N}^{+}\) be the smooth approximation function to $\chi_{I_{\delta_N}}$ from above from
Lemma~\ref{lem:smooth-bump}. It satisfies
\[
        \chi_{I_{\delta_N}}(t)\le g_{\delta_N}^{+}(t),
        \qquad \text{where }
        I_{\delta_N}=\{t\in\mathbb T:\|t\|<\delta_N\}.
\]
Writing its Fourier expansion as
\[
        g_{\delta_N}^{+}(t)
        =
        \sum_{\ell\in\mathbb Z}a_{\ell,\delta_N}^{+}e(\ell t).
\]
by \eqref{eq:a0R} and \eqref{eq:alR single}, we have
\[
        a_{0,\delta_N}^{+}\ll \delta_N \quad \text{ and }  \quad \sum_{\ell\ne0}
        |a_{\ell,\delta_N}^{+}|\,3^{\beta\nu_3(\ell)}
        \ll 1.
\]
Therefore, for every $n\in \NN$
\[
\begin{aligned}
        \mu(A_n(\delta_N))
        &=
        \int_C \chi_{I_{\delta_N}}(2^n x)\,d\mu(x)        \\
        &\le
        \int_C g_{\delta_N}^{+}(2^n x)\,d\mu(x)           \\
        &=
        a_{0,\delta_N}^{+}
        +
        \sum_{\ell\ne0}
        a_{\ell,\delta_N}^{+}\widehat\mu(\ell2^n).
\end{aligned}
\]
Hence,
\[
\begin{aligned}
        \sum_{N<n\le2N}\mu(A_n(\delta_N))
        &\le
        N a_{0,\delta_N}^{+}
        +
        \sum_{\ell\ne0}
        |a_{\ell,\delta_N}^{+}|
        \sum_{N<n\le2N}|\widehat\mu(\ell2^n)|.
\end{aligned}
\]
By \eqref{eq:single}, for $l\neq 0$
\[
        \sum_{N<n\le2N}|\widehat\mu(\ell2^n)|
        \ll
        N^\gamma 3^{\beta\nu_3(\ell)}.
\]
Hence
\[
\begin{aligned}
        \sum_{N<n\le2N}\mu(A_n(\delta_N))
        &\ll
        N\delta_N
        +
        N^\gamma
        \sum_{\ell\ne0}
        |a_{\ell,\delta_N}^{+}|3^{\beta\nu_3(\ell)}   \\
        &\ll
        N\delta_N+N^\gamma\\
        &=N^{1-\beta}+N^{\gamma}=2 N^{\gamma}
\end{aligned}
\]
Thus
\begin{equation}
        \sum_{N<n\le2N}\mu(A_n(\delta_N))
        \ll
        N^\gamma.
\end{equation}
\end{proof}
Since the Cantor measure is $\gamma$-Ahlfors regular, there exists constants $0<c<C<\infty$ such that, for every $x\in C$ and every $0<r\le1$,
\begin{equation}\label{eq:ahlfors}
        c r^\gamma\le \mu(B(x,r))\le C r^\gamma.
\end{equation}
We next prove a coarse-to-fine transfer based on the Ahlfors regularity of the Cantor measure. A closely related endpoint counting form of this Lemma appears in Allen--Baker--Chow--Yu \cite[Lemma~7]{AllenBakerChowYuNote}.
\begin{lemma}[Coarse-to-fine transfer]\label{lem:coarse-to-fine}
There is an absolute constant $C_0$ such that the following holds.  Let $n\ge1$, and
\[
        0<\sigma<\delta/20,
        \qquad
        0<\delta<1/20.
\]
Then
\begin{equation}\label{eq:coarse-to-fine}
        \mu(A_n(\sigma))
        \le
        C_0\left(\frac{\sigma}{\delta}\right)^\gamma
        \mu(A_n(\delta)).
\end{equation}
\end{lemma}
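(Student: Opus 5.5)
We will prove \eqref{eq:coarse-to-fine} by a covering argument. We decompose $A_n(\sigma)$ and $A_n(\delta)$ into pieces around the dyadic points $p/2^n$ and compare the measures piece by piece using the Ahlfors regularity \eqref{eq:ahlfors}. Concretely,
\[
        A_n(r)=C\cap\bigcup_{p\in\ZZ}
        \left(\frac{p}{2^n}-\frac{r}{2^n},\ \frac{p}{2^n}+\frac{r}{2^n}\right),
\]
and for $r<1/2$ these intervals are pairwise disjoint. Hence
\[
        \mu(A_n(r))=\sum_p \mu\bigl(J_p(r)\bigr),
        \qquad
        J_p(r)=\left(\tfrac{p-r}{2^n},\tfrac{p+r}{2^n}\right).
\]
It therefore suffices to show that for each $p$ with $\mu(J_p(\sigma))>0$ we have
\[
        \mu(J_p(\sigma))\le C_0(\sigma/\delta)^\gamma\,\mu(J_p(\delta)).
\]

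Fix such a $p$ and choose a point $x\in C\cap J_p(\sigma)$. Since $\sigma<\delta/20$, the triangle inequality gives
\[
        B\bigl(x,\tfrac{\delta}{2\cdot 2^n}\bigr)\subset J_p(\delta),
        \qquad
        J_p(\sigma)\subset B\bigl(x,\tfrac{2\sigma}{2^n}\bigr).
\]
Indeed, $|x-p/2^n|<\sigma/2^n$, and $\sigma+\delta/2<\delta$. Applying \eqref{eq:ahlfors} at the centre $x\in C$ yields
\[
        \mu(J_p(\sigma))\le C\,(2\sigma 2^{-n})^\gamma,
        \qquad
        \mu(J_p(\delta))\ge c\,(\delta 2^{-n}/2)^\gamma.
\]
Both radii are at most $1$ because $\delta<1/20$. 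Dividing gives the piecewise bound with $C_0=(C/c)4^\gamma$, which is absolute.

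Summing over $p$ then yields
\[
        \mu(A_n(\sigma))
        =\sum_{p:\,\mu(J_p(\sigma))>0}\mu(J_p(\sigma))
        \le C_0(\sigma/\delta)^\gamma\sum_p\mu(J_p(\delta))
        = C_0(\sigma/\delta)^\gamma\,\mu(A_n(\delta)).
\]
The only point needing care is that the pieces $J_p(\delta)$ are disjoint, so no $\delta$-piece is counted twice. This holds because $\delta<1/2$ and the dyadic points $p/2^n$ are spaced $2^{-n}$ apart. The small-constant hypotheses $\sigma<\delta/20$ and $\delta<1/20$ are more than sufficient.

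The main subtlety is the choice of centre. Ahlfors regularity is only valid for balls centred on $C$, so we cannot centre at $p/2^n$, which need not lie in $C$. We handle this by centring both balls at a Cantor point $x$ lying inside the small piece $J_p(\sigma)$. This is exactly why the argument only addresses pieces with $\mu(J_p(\sigma))>0$; the remaining pieces contribute nothing.
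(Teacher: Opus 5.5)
Your proof is correct and follows essentially the same route as the paper. You pick a Cantor point in each nonempty piece $J_p(\sigma)$, bound that piece above by the upper Ahlfors bound on $B(x_p,2\sigma 2^{-n})$, and bound the $\delta$-piece below by the lower Ahlfors bound on a smaller ball around the same point; the only difference is bookkeeping, since you compare piece by piece and sum using disjointness of the intervals $J_p(\delta)$, whereas the paper aggregates through the count $\#\mathcal P$ and uses disjointness of the balls $B(x_p,\delta/(20\cdot 2^n))$.
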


\begin{proof}
For $p\in\mathbb Z$, put
\[
        c_p=\frac{p}{2^n},
        \qquad \text{ and }\qquad
        J_p(\sigma)=\left(c_p-\frac{\sigma}{2^n},c_p+\frac{\sigma}{2^n}\right).
\]
Let
\[
        \mathcal P=\{p\in\mathbb Z:J_p(\sigma)\cap C\ne\varnothing\}.
\]
For each $p\in\mathcal P$, choose $x_p\in J_p(\sigma)\cap C$. Then 
$$J_p(\sigma)\subset \left(x_p-\frac{2\sigma}{2^n},x_p+\frac{2\sigma}{2^n}\right)$$
Therefore
$$A_n(\sigma)\subset \bigcup_{p\in \mathcal P}J_p(\sigma)\cap C\subset \bigcup_{p\in \mathcal P}\left(x_p-\frac{2\sigma}{2^n},x_p+\frac{2\sigma}{2^n}\right)\cap C$$
By the upper Ahlfors bound,
\begin{equation}\label{eq:small-union-upper}
        \mu(A_n(\sigma))
        \ll \#\mathcal P\left(\frac{\sigma}{2^n}\right)^\gamma.
\end{equation}
Now set
\[
        \rho=\frac{\delta}{20\cdot2^n}.
\]
If $p\ne q$, then $|c_p-c_q|\ge2^{-n}$, while
\[
        |x_p-c_p|<\frac{\sigma}{2^n},
        \qquad
        |x_q-c_q|<\frac{\sigma}{2^n}.
\]
Since $\sigma<\delta/20<1/400$, the points $x_p$ are separated by at least $(1-2\sigma)2^{-n}$. Therefore, the balls $B(x_p,\rho)$ are pairwise disjoint. And, if $z\in B(x_p,\rho)$, then
\[
        |z-c_p|\le |z-x_p|+|x_p-c_p|
        <\frac{\delta}{20\cdot2^n}+\frac{\sigma}{2^n}
        <\frac{\delta}{2^n}.
\]
Thus $B(x_p,\rho_*)\cap C\subset A_n(\delta)$.  By the lower Ahlfors bound and disjointness,
\begin{equation}\label{eq:large-union-lower}
        \mu(A_n(\delta))
        \ge \sum_{p\in\mathcal P}\mu(B(x_p,\rho_*))
        \gg \#\mathcal P\left(\frac{\delta}{2^n}\right)^\gamma.
\end{equation}
Combining \eqref{eq:small-union-upper} and \eqref{eq:large-union-lower} gives \eqref{eq:coarse-to-fine}.
\end{proof}
\subsection{Proof of the convergence theorem}
\begin{proof}[Proof of Theorem \ref{thm:intro-convergence}]
Let $\tau>2-\gamma=1+\beta$.  By the Borel--Cantelli lemma it is enough to prove
\begin{equation}\label{eq:bc-sum-needed}
        \sum_{n=1}^\infty \mu(A_n(n^{-\tau}))<\infty.
\end{equation}
Set
\[
        \sigma_N=N^{-\tau},
        \qquad
        \delta_N=N^{-\beta}.
\] 
Since $\tau>\beta$, for all sufficiently large $N$ one has $\sigma_N<\delta_N/20$.  
For $N<n\leq 2N$, Lemma \ref{lem:coarse-to-fine} gives
\[
        \mu(A_n(n^{-\tau}))
        \le \mu(A_n(\sigma_N))
        \ll
        \left(\frac{\sigma_N}{\delta_N}\right)^\gamma
        \mu(A_n(2\delta_N)).
\]
Summing over $N<n\leq 2N$ and applying Lemma \ref{lem:coarse-first-moment} yields
\[
\begin{aligned}
        \sum_{N<n\le2N}\mu(A_n(n^{-\tau}))
        &\ll
        \left(\frac{N^{-\tau}}{N^{-\beta}}\right)^\gamma
        N^\gamma \\
        &=N^{-\gamma(\tau-\beta)}N^\gamma
        =N^{\gamma(1+\beta-\tau)}.
\end{aligned}
\]
Since $1+\beta=2-\gamma$ and $\tau>2-\gamma$, the exponent $\gamma(1+\beta-\tau)$ is negative.  Therefore
\[
        \sum_{k=1}^\infty
        \sum_{2^k<n\le2^{k+1}}\mu(A_n(n^{-\tau}))
        \ll
        \sum_{k=1}^\infty
        2^{k\gamma(2-\gamma-\tau)}<\infty.
\]
This proves \eqref{eq:bc-sum-needed}, and Borel--Cantelli gives
\[
        \mu(W_2(\psi_\tau))=0.
\]
\end{proof}

\section{Divergence estimates}
By Lemma \ref{lem:smooth-bump} and \ref{lem: coeff sums}, for
each \(0<R<1/4\), there is a smooth function \(g_R^{-}:\mathbb T\to\mathbb R\)
such that
\[
        0\le g_R^-(t)\le \chi_{I_R}(t),
\]
and, writing
\[
        g_R^-(t)=\sum_{\ell\in\mathbb Z}a^-_{\ell,R}e(\ell t),
\]
we have
\[
        a^-_{0,R}\gg R, \qquad   |a^-_{\ell,R}|\ll_A R(1+R|\ell|)^{-A}
\]
for every $A>1$, and
\begin{equation}\label{eq:coeff-single-use}
        \sum_{\ell\ne0}|a_{\ell,R}^-|3^{\beta\nu_3(\ell)}\ll1,
\end{equation}
\begin{equation}\label{eq:coeff-double-use}
        \sum_{\ell,j\ne0}
        |a_{\ell,R}^-|\,|a_{j,R}^-|\,
        3^{\beta\min(\nu_3(\ell),\nu_3(j))}
        \ll1.
\end{equation}
Let $R_N=(2N)^{-\tau}$, and set
\[
        Y_N(x)=\sum_{N<n\le2N}g^-_{R_N}(2^n x).
\]
We first estimate
\[
        M_N:=\int_C Y_N(x)\,d\mu(x).
\]
\begin{lemma}\label{lem:mean lower bound}
   For $N$ large enough, 
   \begin{equation}\label{eq:mean lower bound}
       M_N\gg N^{1-\tau}
   \end{equation}
\end{lemma}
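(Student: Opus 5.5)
The plan is to expand $M_N$ in Fourier series exactly as in the proof of Lemma~\ref{lem:coarse-first-moment}, now with the minorant $g^-_{R_N}$ in place of the majorant. Since $g^-_{R_N}$ is smooth, its Fourier series converges absolutely. Integrating term by term against $\mu$ gives
\[
        M_N=\sum_{N<n\le2N}\int_C g^-_{R_N}(2^nx)\,d\mu(x)
        = N a^-_{0,R_N}+\sum_{\ell\ne0}a^-_{\ell,R_N}\sum_{N<n\le2N}\widehat\mu(\ell2^n).
\]
By \eqref{eq:a0R}, the main term equals $N c_- R_N = c_-2^{-\tau}N^{1-\tau}$, which is $\gg N^{1-\tau}$.

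For the error term, apply Lemma~\ref{lem:single} with $M=N$ and $H=N$ (this needs $N\ge3$). Using $3^{\beta\min(\nu_3(\ell),K)}\le 3^{\beta\nu_3(\ell)}$, we get for every $\ell\ne0$
\[
        \Bigl|\sum_{N<n\le2N}\widehat\mu(\ell2^n)\Bigr|\ll N^\gamma 3^{\beta\nu_3(\ell)}.
\]
Summing against $|a^-_{\ell,R_N}|$ and using \eqref{eq:coeff-single-use}, the whole error term is bounded by $C_1N^\gamma$ for an absolute constant $C_1$. This is valid because $R_N<1/4$ for large $N$, so Lemmas~\ref{lem:smooth-bump} and \ref{lem: coeff sums} apply.

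It then remains to compare exponents. We need $N^{1-\tau}$ to dominate $N^\gamma$, that is, $1-\tau>\gamma$, or equivalently $\tau<\beta=1-\gamma$. This holds in the range of Theorem~\ref{thm:intro-divergence}, since there $\tau<\beta/2<\beta$. Hence
\[
        M_N\ge c_-2^{-\tau}N^{1-\tau}-C_1N^\gamma\gg N^{1-\tau}
\]
for $N$ large. I assume that $\tau$ is in this standing range, so $\tau<\beta$ is available.

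There is no serious obstacle here; the lemma is just the first-moment analogue of Lemma~\ref{lem:coarse-first-moment}. Two details need care. First, the term-by-term integration against $\mu$ must be justified, which follows from absolute convergence of the Fourier coefficients of a smooth function. Second, the constant $2^{-\tau}$ coming from $R_N=(2N)^{-\tau}$ must not be lost.
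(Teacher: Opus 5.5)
Your proposal is correct and follows the paper's proof essentially verbatim. You Fourier-expand $M_N$, take $Nc_-R_N$ as the main term, bound the nonzero frequencies by $O(N^\gamma)$ via Lemma~\ref{lem:single} and \eqref{eq:coeff-single-use}, and conclude using $\tau<(1-\gamma)/2<1-\gamma$; your extra checks (absolute convergence for term-by-term integration, $R_N<1/4$, $N\ge 3$) only make explicit what the paper leaves implicit.
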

\begin{proof}
Using the Fourier expansion of \(g_{R_N}^-\), we have
\[
\begin{aligned}
        M_N
        &=
        \sum_{N<n\le2N}
        \sum_{\ell\in\mathbb Z}
        a_{\ell,R_N}^-\widehat\mu(\ell2^n)      \\
        &=
        Na^-_{0,R_N}
        +
        \sum_{\ell\ne0}
        a^-_{\ell,R_N}
        \sum_{N<n\le2N}\widehat\mu(\ell2^n).
\end{aligned}
\]
The first term satisfies
\[
        Na^-_{0,R_N}\gg NR_N \asymp N^{1-\tau}
\]
and
\[
\begin{aligned}
        \left|
        \sum_{\ell\ne0}
        a^-_{\ell,R_N}
        \sum_{N<n\le2N}\widehat\mu(\ell2^n)
        \right|
        &\le
        \sum_{\ell\ne0}|a^-_{\ell,R_N}|
        \sum_{N<n\le2N}|\widehat\mu(\ell2^n)|        \\
        &\overset{\eqref{eq:single}}{\ll}
        N^\gamma
        \sum_{\ell\ne0}|a^-_{\ell,R_N}|3^{\beta\nu_3(\ell)} \\
        &\overset{\eqref{eq:alR single}}{\ll}
        N^\gamma.
\end{aligned}
\]
Since
\[
        \tau<\frac{1-\gamma}{2}<1-\gamma,
\]
we have
\[
        N^\gamma=o(NR_N).
\]
Thus, for all sufficiently large \(N\),
\begin{equation*}
        M_N\gg N^{1-\tau}.
\end{equation*}
\end{proof}
\begin{lemma}
    \begin{equation}\label{eq:second moment}
        \int_C |Y_N-M_N|^2\,d\mu
        \ll
        N^{1+\gamma}.
\end{equation}
\end{lemma}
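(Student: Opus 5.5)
We will prove the bound by expanding the variance in Fourier series and feeding each resulting frequency sum into the bilinear estimate of Lemma~\ref{lem:bilinear}. Write $g=g^-_{R_N}$ and $a_\ell=a^-_{\ell,R_N}$. Set
\[
        \widetilde g(t)=g(t)-a_0=\sum_{\ell\ne0}a_\ell e(\ell t),
        \qquad
        Z_N(x)=\sum_{N<n\le2N}\widetilde g(2^nx).
\]
Then $Y_N-M_N=Z_N-\int Z_N\,d\mu$, since the constant term $Na_0$ cancels. Hence
\[
        \int_C|Y_N-M_N|^2\,d\mu\le\int_C|Z_N|^2\,d\mu .
\]
This reduces the problem to bounding the second moment of $Z_N$.

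Next we expand the square. Since $g$ is real, $\overline{\widetilde g}=\widetilde g$, and the coefficients satisfy $a_{-j}=\overline{a_j}$. Using the absolute convergence guaranteed by the rapid decay of the $a_\ell$, we get
\[
        \int_C|Z_N|^2\,d\mu
        =\sum_{N<n,m\le2N}\ \sum_{\ell,j\ne0}a_\ell\,\overline{a_j}\,
        \widehat\mu(\ell2^n-j2^m).
\]
The sign convention for $\widehat\mu$ does not matter, because only $|\widehat\mu|$ will enter. By the triangle inequality this is at most
\[
        \sum_{\ell,j\ne0}|a_\ell||a_j|\sum_{N<n,m\le2N}|\widehat\mu(\ell2^n-j2^m)|.
\]

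Now apply Lemma~\ref{lem:bilinear} with $H=N$, $M=N$, $u=\ell$ and $v=-j$. Both $u$ and $v$ are nonzero integers, and $\nu_3(-j)=\nu_3(j)$, so the inner double sum is
\[
        \ll N^{1+\gamma}3^{\beta\min(\nu_3(\ell),\nu_3(j),K)}
        \le N^{1+\gamma}3^{\beta\min(\nu_3(\ell),\nu_3(j))}.
\]
The lemma allows arguments where $u2^n+v2^m=0$. There $|\widehat\mu(0)|=1$, and these diagonal-type terms are already covered by its bound, so no separate treatment is needed. Summing over $\ell,j$ and invoking \eqref{eq:coeff-double-use} gives
\[
        \int_C|Z_N|^2\,d\mu\ll N^{1+\gamma}\sum_{\ell,j\ne0}|a_\ell||a_j|3^{\beta\min(\nu_3(\ell),\nu_3(j))}\ll N^{1+\gamma},
\]
which is the claimed bound.

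The only delicate point is justifying the interchange of the $x$-integral with the double Fourier series. This is routine: the bound $\sum_\ell|a_\ell|<\infty$ from \eqref{eq:alR}, for fixed $N$, makes the series converge absolutely and uniformly. One must also check that the implicit constant stays uniform in $N$. It does, because Lemma~\ref{lem:bilinear} has an absolute constant and \eqref{eq:coeff-double-use} is uniform in $R$. No step requires a new idea; the main work was already done in Lemma~\ref{lem:bilinear}.
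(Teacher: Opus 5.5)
Your proposal is correct and follows the paper's proof essentially step for step: you reduce to the second moment of $Z_N$, expand the square, apply Lemma~\ref{lem:bilinear} with $u=\ell$ and $v=-j$, and finish with \eqref{eq:coeff-double-use}. Your extra remarks simply make explicit what the paper leaves implicit, namely the identity $Y_N-M_N=Z_N-\int Z_N\,d\mu$ and the absolute convergence that justifies interchanging the integral with the Fourier series.
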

\begin{proof}
Let
\[
        Z_N(x)
        =
        \sum_{N<n\le2N}
        \sum_{\ell\ne0}
        a_{\ell,R_N}e(\ell2^n x).
\]
Then,
\[
        \int_C |Y_N-M_N|^2\,d\mu
        \le
        \int_C |Z_N|^2\,d\mu.
\]
Expanding the square,
\[
\begin{aligned}
 \int_C |Z_N(x)|^2\,d\mu(x)
       &=
        \sum_{N<n,m\le2N}
        \sum_{\ell,j\ne0}
        a^-_{\ell,R_N}\overline{a^-_{j,R_N}}\,
        \widehat\mu(\ell2^n-j2^m)\\
        &\leq \sum_{\ell,j\ne0}
        |a^-_{\ell,R_N}|\,|a^-_{j,R_N}|
        \sum_{N<n,m\leq 2N}
        |\widehat\mu(\ell2^n-j2^m)|                \\
        &\overset{\eqref{eq:bilinear}}{\ll}
        N^{1+\gamma}
        \sum_{\ell,j\ne0}
        |a^-_{\ell,R_N}|\,|a^-_{j,R_N}|\,
        3^{\beta\min(\nu_3(\ell),\nu_3(j))}        \\
        &\overset{\eqref{eq:alR double}}{\ll}
        N^{1+\gamma}.
\end{aligned}
\]
\end{proof}
\subsection{Proof of divergence theorem}
\begin{proof}[Proof of Theorem \ref{thm:intro-divergence}]
Define
\[
        \mathcal E_N
        =
        \left\{
        x\in C:
        \|2^n x\|\ge R_N
        \text{ for every }N<n\le2N
        \right\}.
\]
If \(x\in\mathcal E_N\), then \(2^n x\notin\{t: \|t\|<R_N\}\) for every
\(N<n\le2N\). Since
\[
        0\le g^{-}_{R_N}\le \chi_{\{t: \|t\|<R_N\}},
\]
we get
\[
        Y_N(x)=0.
\]
Hence
\[
        \mathcal E_N
        \subseteq
        \{|Y_N-M_N|\ge M_N\}.
\]
By Markov's inequality, together with
\eqref{eq:mean lower bound} and
\eqref{eq:second moment} gives
\[
\begin{aligned}
        \mu(\mathcal E_N)
        &\le
        \frac{1}{M_N^2}
        \int_C |Y_N-M_N|^2\,d\mu     \\
        &\ll
        \frac{N^{1+\gamma}}{N^{2(1-\tau)}}\ll
        N^{-1+\gamma+2\tau}.
\end{aligned}
\]
Because
\[
        \tau<\frac{1-\gamma}{2},
\]
we have
\[
        -1+\gamma+2\tau<0.
\]
Therefore
\[
        \sum_{k=1}^{\infty}\mu(\mathcal E_{2^k})=\sum_{k=1}^{\infty}2^{k(-1+\gamma+2\tau)}<\infty.
\]
By the Borel--Cantelli lemma, for \(\mu\)-almost every \(x\in C\), only
finitely many of the events \(\mathcal E_{2^k}\) occur. Hence, for all
sufficiently large \(k\), there exists
\[
        2^k<n\le2^{k+1}
\]
such that
\[
        \|2^n x\|<R_{2^k}=(2^{k+1})^{-\tau}
        \le n^{-\tau}
\]
Thus
\[
        \|2^n x\|<n^{-\tau}
\]
for infinitely many \(n\). This proves the theorem.\\

\section*{Statements and Declarations}

\subsection*{Conflict of interest}
The author has no conflict of interest.

\subsection*{Data availability}
There is no associated data.

\end{proof}
\bibliographystyle{alpha}
\bibliography{references}
\end{document}